\newtheorem{theorem}{Theorem}[section]
\newtheorem{proposition}[theorem]{Proposition}
\newtheorem{corollary}[theorem]{Corollary}
\theoremstyle{definition}
\theoremstyle{remark}
\numberwithin{equation}{section}
\newfont{\kh}{msbm10}
\begin{document}
\title[Normality of adjointable module maps]
{Normality of adjointable module maps}
\author{K. Sharifi}
\address{Kamran Sharifi, \newline Department of Mathematics,
Shahrood University of Technology, P. O. Box 3619995161-316,
Shahrood, Iran} \email{sharifi.kamran@gmail.com and
sharifi@shahroodut.ac.ir}

\subjclass[2000]{Primary 46L08; Secondary 47A05, 46C05}
\keywords{Hilbert C*-module, polar decomposition, normal
operator, C*-algebra of compact operators, unbounded operator}

\begin{abstract}Normality of bounded and unbounded
adjointable operators are discussed. Suppose $T$ is an adjointable
operator between Hilbert C*-modules which has polar
decomposition, then $T$ is normal if and only if there exists a
unitary operator $ \mathcal{U}$ which commutes with $T$ and $T^*$
such that $T=\mathcal{U} \, T^*.$ Kaplansky's theorem for
normality of the product of bounded operators is also reformulated
in the framework of Hilbert C*-modules.
\end{abstract}
\maketitle

\section{Introduction and preliminary.}
Normal operators may be regarded as a generalization of a
selfadjoint operator $T$ in which $T^*$ need not be exactly $T$
but commutes with $T$. They form an attractive and important class
of operators which play a vital role in operator theory,
especially, in spectral theory. In this note we will study bounded
and unbounded normal module maps between Hilbert C*-modules which
have polar decomposition. Indeed, for adjointable operator $T$
between Hilbert C*-modules which has polar decomposition, we
demonstrate that $T$ is normal if and only if there exists a
unitary operator $ \mathcal{U}$ such that $T=\mathcal{U} \, T^*$.
In this situation, $\mathcal{U} \,T \subseteq T \, \mathcal{U}$
and $\mathcal{U} \,T^* \subseteq T^* \, \mathcal{U}$ (compare
\cite[Problem 13, page 109]{Horn/Matrix}). The results are
interesting even in the case of Hilbert spaces.

Suppose $T, S$ are bounded adjointable operators between Hilbert
C*-modules. Suppose $T$ has polar decomposition and $T$ and $TS$
are normal operators. Then we show that $ST$ is a normal operator
if and only if $S$ commutes with $|T|$. This fact has been proved
by Kaplansky \cite{Kaplansky/normality1953} in the case of Hilbert
spaces.

Throughout the present paper we assume $\mathcal{A}$ to be an
arbitrary C*-algebra. We deal with bounded and unbounded
operators at the same time, so we denote bounded operators by
capital letters and unbounded operators by small letters. We use
the notations $Dom(.)$, $Ker(.)$ and $Ran(.)$ for domain, kernel
and range of operators, respectively.

Hilbert C*-modules are essentially objects like Hilbert spaces,
except that the inner product, instead of being complex-valued,
takes its values in a C*-algebra. Although Hilbert C*-modules
behave like Hilbert spaces in some ways, some fundamental Hilbert
space properties like Pythagoras' equality, self-duality, and even
decomposition into orthogonal complements do not hold. A (right)
{\it pre-Hilbert C*-module} over a C*-algebra $\mathcal{A}$ is a
right $\mathcal{A}$-module $X$ equipped with an
$\mathcal{A}$-valued inner product $\langle \cdot , \cdot \rangle
: X \times X \to \mathcal{A}\,, \ (x,y) \mapsto \langle x,y
\rangle$, which is $\mathcal A$-linear in the second variable $y$
and has the properties:
$$ \langle x,y \rangle=\langle y,x \rangle ^{*}, \
\  \langle x,x \rangle \geq 0 \ \ {\rm with} \
   {\rm equality} \ {\rm only} \ {\rm when} \  x=0.$$
A pre-Hilbert $\mathcal{A}$-module $X$ is called a {\it Hilbert $
\mathcal{A}$-module} if $X$ is a Banach space with respect to the
norm $\| x \|=\|\langle x,x\rangle \| ^{1/2}$. A Hilbert
$\mathcal A$-submodule $W$ of a Hilbert $\mathcal A$-module $X$ is
an orthogonal summand if $W \oplus W^\bot = X$, where $W^\bot$
denotes the orthogonal complement of $W$ in $X$. We denote by
$\mathcal{L}(X)$ the C*-algebra of all adjointable operators on
$X$, i.e., all $\mathcal A$-linear maps $T :X \rightarrow X$ such
that there exists $T^*:X \rightarrow X$ with the property $
\langle Tx,y \rangle =\langle x,T^*y \rangle$ for all $ x,y \in
X$. A bounded adjointable operator $ \mathcal{V} \in
\mathcal{L}(X)$ is called a {\it partial isometry} if $
\mathcal{V} \, \mathcal{V}^*  \mathcal{V} = \mathcal{V}$, see
\cite{SHA/PARTIAL} for some equivalent conditions. For the basic
theory of Hilbert C*-modules we refer to the books \cite{LAN,
WEG} and the papers \cite{B-G/class, FR2}.

An unbounded regular operator on a Hilbert C*-module is an
analogue of a closed operator on a Hilbert space. Let us quickly
recall the definition. A densely defined closed
$\mathcal{A}$-linear map $t: Dom(t) \subseteq X \to X$ is called
{\it regular} if it is adjointable and the operator $1+t^*t$ has
a dense range.  Indeed, a densely defined operator $t$ with a
densely defined adjoint operator $t^*$ is regular if and only if
its graph is orthogonally complemented in $X \oplus X$ (see e.g.
\cite{F-S, LAN}). We denote the set of all regular operators on
$X$ by $\mathcal{R}(X)$. If $t$ is regular then $t^*$ is regular
and $t=t ^{**}$, moreover $t^*t$ is regular and selfadjoint.
Define $Q_{t}=(1+t^*t)^{-1/2}$ and $F_{t}=tQ_{t}$, then
$Ran(Q_{t})=Dom(t)$,  $0 \leq Q_{t}= (1 - F_{t}^{*} F_{t})^{1/2}
\leq 1$ in $\mathcal{L}(X)$ and $F_{t}\in \mathcal{L}(X)$
\cite[(10.4)]{LAN}. The bounded operator $F_{t}$ is called the
bounded transform of regular operator $t$. According to
\cite[Theorem 10.4]{LAN}, the map $t\to F_{t}$ defines an
adjoint-preserving bijection
$$\mathcal{R}(X) \to \{ F \in \mathcal{L}(X) :\| F \|\leq 1 \ \, {\rm and} \ \
Ran(1- F^* F ) \ {\rm is} \ {\rm dense \ in} \ X \}.
$$

Very often there are interesting relationships between regular
operators and their bounded transforms. In fact, for a regular
operator $t$, some properties transfer to its bounded transform
$F_{t}$, and vice versa. Suppose $t \in \mathcal{R}(X)$ is a
regular operator, then  $t$ is called {\it normal} iff
$Dom(t)=Dom(t^*)$ and $\langle tx,tx \rangle=\langle t^*x,t^*x
\rangle $ for all $x \in Dom(t)$. $t$ is called {\it selfadjoint}
iff $t^*=t$ and $t$ is called {\it positive} iff $t$ is normal and
$\langle tx,x \rangle \geq 0$ for all $ x \in Dom(t)$. In
particular, a regular operator $t$ is normal (resp., selfadjoint,
positive) iff its bounded transform $F_t$ is normal (resp.,
selfadjoint, positive). Moreover, both $t$ and $F_t$ have the
same range and the same kernel. If $t \in \mathcal{R}(X)$ then
$Ker(t)=Ker(|t|)$ and $\overline{Ran(t^*)}= \overline{Ran(|t|)}$
, cf. \cite{KUS}. If $t  \in \mathcal{R}(X)$ is a normal operator
then $Ker(t)=Ker(t^*)$ and $\overline{Ran(t)}=
\overline{Ran(t^*)}$.

A bounded adjointable operator $T$ has polar decomposition if and
only if $\overline{Ran(T)}$ and $\overline{Ran(|T|)}$ are
orthogonal direct summands \cite[Theorem 15.3.7]{WEG}. The result
has been generalized in Theorem 3.1 of \cite{FS2} for regular
operators. Indeed, for $t \in \mathcal{R}(X)$ the following
conditions are equivalent:
\begin{itemize}
\item $t$ has a unique polar decomposition $t= \mathcal{V}|t|$,
where $\mathcal{V} \in \mathcal{L}(X)$ is a partial isometry for
which $Ker(\mathcal{V})=Ker(t)$.

\item $X=Ker(|t|) \oplus \overline{Ran(|t|)}$ and $X=Ker(t^*) \oplus
\overline{Ran(t)}$.

\item The adjoint operator $t^*$ has polar decomposition $t^*=
\mathcal{V^*}|t^*|$.

\item The bounded transform $F_t$ has polar decomposition
$F_{t}= \mathcal{V}|F_{t}|$.
\end{itemize}
\noindent In this situation, $\mathcal{V^*}\mathcal{V}|t|=|t|$,
$\mathcal{V^*}t=|t|$ and $\mathcal{V}\mathcal{V^*}t=t$, moreover,
we have  $Ker(\mathcal{V^*})=Ker(t^*)$,
$Ran(\mathcal{V})=\overline{Ran(t)}$ and $Ran(\mathcal{V^*})=
\overline{Ran(t^*)}$. That is, $ \mathcal{V} \mathcal{V^*}$ and $
\mathcal{V^*} \mathcal{V}$ are orthogonal projections onto the
submodules $ \overline{Ran(t)}$ and $ \overline{Ran(t^*)}$,
respectively.

The above facts and Proposition 1.2 of \cite{F-S} show that each
regular operator with closed range has polar decomposition.

Recall that an arbitrary C*-algebra of compact operators
$\mathcal{A}$ is a $c_{0}$-direct sum of elementary C*-algebras
$\mathcal{K}(H_{i})$ of all compact operators acting on Hilbert
spaces $H_{i}, \ i \in I$,  cf.~\cite[Theorem 1.4.5]{ARV}.
Generic properties of Hilbert C*-modules over C*-algebras of
compact operators have been studied systematically in \cite{ARA,
B-G, F-S, FS2, GUL} and references therein. If $\mathcal{A}$ is a
C*-algebra of compact operators then for every Hilbert $
\mathcal{A}$-module $X$, every densely defined closed operator
$t: Dom(t)\subseteq X \to X$ is automatically regular and has
polar decomposition, cf. \cite{F-S, FS2, GUL}.

The stated results also hold for bounded adjointable operators,
since $ \mathcal{L}(X)$ is a subset of $\mathcal{R}(X)$. The space
$\mathcal{R}(X)$ from a topological point of view are studied in
\cite{SHA/GAP, SHA/APROACH, SHA/Continuity}.


\section{Normality}
\begin{proposition} \label{normal1} Suppose $ T \in \mathcal{L}(X)$ admits the polar
decomposition $T= \mathcal{V}|T|$ and $S \in \mathcal{L}(X)$ is
an arbitrary operator which commutes with $T$ and $T^*$. Then $
\mathcal{V}$ and $|T|$ commute with $S$ and $S^*$.
\end{proposition}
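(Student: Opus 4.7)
The plan is to first handle $|T|$ and then leverage the polar decomposition identity to handle $\mathcal{V}$.

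For the commutation with $|T|$, I would begin by observing that since $S$ commutes with both $T$ and $T^*$, a direct calculation gives $S(T^*T) = T^*(ST) = T^*(TS) = (T^*T)S$, so $S$ commutes with $|T|^2$. Because $|T|$ is a positive element of the C*-algebra $\mathcal{L}(X)$, the continuous functional calculus applied to $|T|^2$ yields $|T| = (|T|^2)^{1/2}$ as a norm limit of polynomials in $|T|^2$, so $S$ commutes with $|T|$ as well. Taking adjoints of $ST = TS$ and $ST^* = T^*S$ shows that $S^*$ also commutes with $T$ and $T^*$, and the same argument gives $S^* |T| = |T| S^*$.

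For the commutation with $\mathcal{V}$, I would use the identity $T = \mathcal{V}|T|$ together with the two commutation relations already established. From $ST = TS$ and $S|T| = |T|S$, one obtains
\begin{equation*}
S\mathcal{V}|T| = ST = TS = \mathcal{V}|T|S = \mathcal{V}S|T|,
\end{equation*}
so $(S\mathcal{V} - \mathcal{V}S)|T| = 0$ on all of $X$. This gives $S\mathcal{V} = \mathcal{V}S$ on $\overline{Ran(|T|)}$. On the complementary summand $Ker(|T|) = Ker(T) = Ker(\mathcal{V})$, I would argue that both sides vanish: if $x \in Ker(T)$ then $TSx = STx = 0$, so $Sx \in Ker(T) = Ker(\mathcal{V})$, giving $\mathcal{V}Sx = 0$, while $S\mathcal{V}x = 0$ trivially. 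Since $X = Ker(|T|) \oplus \overline{Ran(|T|)}$ by the polar decomposition hypothesis, the equality $S\mathcal{V} = \mathcal{V}S$ holds on all of $X$. Applying the identical argument to $S^*$ (which, as noted above, also commutes with $T$ and $T^*$ and with $|T|$) gives $S^*\mathcal{V} = \mathcal{V}S^*$.

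The only potentially delicate point is the step from $S |T|^2 = |T|^2 S$ to $S|T| = |T|S$; in the Hilbert space setting this is a standard consequence of the square-root's polynomial approximation on the spectrum, and the same reasoning transfers verbatim to $\mathcal{L}(X)$ since it is a C*-algebra with the usual continuous functional calculus on positive elements. Everything else is a direct verification on the two orthogonal summands provided by the existence of the polar decomposition.
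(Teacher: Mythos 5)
Your proposal is correct and follows essentially the same route as the paper's proof: establish $S|T| = |T|S$ from $S(T^*T) = (T^*T)S$, show $(S\mathcal{V} - \mathcal{V}S)|T| = 0$, handle the kernel summand by noting $S$ leaves $Ker(|T|) = Ker(\mathcal{V})$ invariant, and conclude via the decomposition $X = Ker(|T|) \oplus \overline{Ran(|T|)}$, repeating the argument for $S^*$. The only difference is that you spell out the functional-calculus justification for passing from $|T|^2$ to $|T|$, which the paper leaves implicit.
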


\begin{proof} It follows from the hypothesis that
$(T^*T)S=S(T^*T)$ which implies $ |T|S=S|T|$, or equivalently $
|T|S^*=S^*|T|$. Using the commutativity of $S$ with $T$ and
$|T|$, we get
$$(S \mathcal{V} -
  \mathcal{V} S ) |T| = S \mathcal{V} |T| - \mathcal{V} |T| S =ST - TS=0.$$
That is, $ S \mathcal{V} -   \mathcal{V} S $ acts as zero operator
on $ \overline{Ran( |T|)}$. If  $ x \in Ker(|T|)= Ker (
\mathcal{V})$ then $ |T|x= \mathcal{V}x =0$, consequently
$|T|Sx=S|T|x=0$. Then $Sx \in Ker(|T|)= Ker ( \mathcal{V})$,
therefore, $ S \mathcal{V} - \mathcal{V} S $ acts as zero operator
on $Ker(|T|)$ too. We obtain
$$ S \mathcal{V} - \mathcal{V} S =0 \ \ {\rm on } \ \ X=Ker(|T|)
\oplus \overline{Ran(|T|)}. $$  The statement $ S^* \mathcal{V} -
\mathcal{V} S^* =0 \ \ {\rm on } \ \ X=Ker(|T|) \oplus
\overline{Ran(|T|)}$ can be deduced from the commutativity of $S$
with $T^*$ and $|T|$ in the same way.
\end{proof}

\begin{corollary} \label{normal2} Suppose $ T \in \mathcal{L}(X)$ is a normal operator which
admits the polar decomposition $T= \mathcal{V}|T|$ then $
\mathcal{V}$ and $|T|$ commute with the operators $T, T^*,
\mathcal{V}$ and $ \mathcal{V^*}$. In particular, $ \mathcal{V}$
is a unitary operator on $ \overline{Ran(T)} = \overline{Ran(T^*)}
$.
\end{corollary}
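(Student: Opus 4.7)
The plan is to bootstrap the preceding Proposition \ref{normal1} by applying it twice, and then extract the unitarity assertion from the general polar-decomposition identities recorded in the introduction.

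First, I would take $S=T$ in Proposition \ref{normal1}. This is legitimate because the normality of $T$ says precisely that $T$ commutes with $T^*$ (and of course with itself). The proposition then yields that $\mathcal{V}$ and $|T|$ each commute with $T$ and with $T^*$. In particular, $\mathcal{V}$ commutes with both $T$ and $T^*$. Hence I can reapply Proposition \ref{normal1} with $S=\mathcal{V}$ in place of $S$; the conclusion of this second application is that $\mathcal{V}$ and $|T|$ commute with $\mathcal{V}$ and $\mathcal{V}^*$. Combining the two applications, $\mathcal{V}$ and $|T|$ commute with each of $T$, $T^*$, $\mathcal{V}$, $\mathcal{V}^*$, which is the first assertion.

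For the ``in particular'' statement, recall from the preliminaries that if $T$ is normal then $\overline{Ran(T)}=\overline{Ran(T^*)}$, so the two submodules named in the corollary really coincide. From the polar decomposition facts quoted earlier, $\mathcal{V}\mathcal{V}^*$ is the orthogonal projection of $X$ onto $\overline{Ran(T)}$ and $\mathcal{V}^*\mathcal{V}$ is the orthogonal projection of $X$ onto $\overline{Ran(T^*)}$. Since these two projections have the same range, they are equal, and both act as the identity on $\overline{Ran(T)}=\overline{Ran(T^*)}$. Together with $Ran(\mathcal{V})=\overline{Ran(T)}$ this shows $\mathcal{V}$ leaves $\overline{Ran(T)}$ invariant and restricts there to a surjective isometry, i.e.\ a unitary.

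There is not really a major obstacle in this argument; the only thing to be careful about is the iterated use of Proposition \ref{normal1}, namely checking that after the first application one genuinely has an operator $S=\mathcal{V}$ which commutes with $T$ and $T^*$ so that the hypotheses of the proposition are satisfied a second time. The unitarity part is essentially a bookkeeping consequence of the identities $Ker(\mathcal{V})=Ker(T)$, $Ker(\mathcal{V}^*)=Ker(T^*)$, $Ran(\mathcal{V})=\overline{Ran(T)}$, $Ran(\mathcal{V}^*)=\overline{Ran(T^*)}$ once the normality identity $\overline{Ran(T)}=\overline{Ran(T^*)}$ is invoked.
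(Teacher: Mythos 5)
Your proposal is correct and follows essentially the same route as the paper: the paper's one-sentence proof cites Proposition \ref{normal1} (implicitly applied twice, first with $S=T$ using normality, then with $S=\mathcal{V}$) together with the polar-decomposition identities, which is exactly your bootstrapping argument. Your derivation of the unitarity of $\mathcal{V}$ on $\overline{Ran(T)}=\overline{Ran(T^*)}$ from the facts that $\mathcal{V}\mathcal{V}^*$ and $\mathcal{V}^*\mathcal{V}$ are the projections onto these coinciding submodules is the same content the paper invokes via Proposition 3.7 of Lance and the identity $\mathcal{V}\mathcal{V}^*T=\mathcal{V}^*\mathcal{V}T=T$.
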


The results follow from Proposition \ref{normal1}, Proposition
3.7 of \cite{LAN} and the fact that $
\mathcal{V}\mathcal{V^*}T=\mathcal{V^*}\mathcal{V}T=T$.

\begin{corollary} \label{normal/2} Suppose $ T \in \mathcal{L}(X)$
admits the polar decomposition $T= \mathcal{V}|T|$. Then $T$ is a
normal operator if and only if there exists a unitary operator $
\mathcal{U} \in \mathcal{L}(X)$ commuting with $|T|$ such that $
T= \mathcal{U} |T|$. In this situation, $\mathcal{U}$ also
commutes with $T$ and $T^*$.
\end{corollary}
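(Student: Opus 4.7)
The plan is to treat the two implications of the equivalence separately. The forward direction is the genuinely nontrivial part: starting from the partial isometry $\mathcal{V}$ of the polar decomposition, I will need to build a full unitary on $X$, and the natural approach is to extend $\mathcal{V}$ by the orthogonal projection onto its kernel. The converse will reduce to a short computation, and the final commutation claim will follow by chasing.

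More precisely, assuming $T$ is normal, Corollary \ref{normal2} already supplies that $\mathcal{V}$ is unitary on $\overline{Ran(T)}=\overline{Ran(T^*)}$ and vanishes on the complementary summand $Ker(T)=Ker(|T|)$. Letting $P\in\mathcal{L}(X)$ be the orthogonal projection onto $Ker(|T|)$, which exists because the polar decomposition hypothesis gives the orthogonal summand decomposition $X=Ker(|T|)\oplus\overline{Ran(|T|)}$, I propose to set $\mathcal{U}:=\mathcal{V}+P$. The verifications I expect to need are: (i) $\mathcal{U}$ is unitary, because it is the identity on $Ker(|T|)$ and a unitary bijection of $\overline{Ran(|T|)}=\overline{Ran(T)}$ onto itself; (ii) $\mathcal{U}|T|=T$, which reduces to $P|T|=0$, immediate from $Ran(|T|)\subseteq Ker(|T|)^{\perp}$; and (iii) $\mathcal{U}$ commutes with $|T|$, obtained by combining the relation $\mathcal{V}|T|=|T|\mathcal{V}$ supplied by Corollary \ref{normal2} with the trivial identities $|T|P=P|T|=0$.

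For the converse I will start from $T=\mathcal{U}|T|$ with $\mathcal{U}$ unitary and $\mathcal{U}|T|=|T|\mathcal{U}$, take adjoints to obtain $T^*=|T|\mathcal{U}^*$, and check that both $TT^*$ and $T^*T$ collapse to $|T|^2$ using $\mathcal{U}^*\mathcal{U}=\mathcal{U}\mathcal{U}^*=1$ and the commutation with $|T|^2$; this yields normality. The final claim that $\mathcal{U}$ commutes with $T$ and $T^*$ then follows by a short manipulation, for instance $\mathcal{U}T=\mathcal{U}^2|T|=\mathcal{U}|T|\mathcal{U}=T\mathcal{U}$, and symmetrically for $T^*$.

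The main obstacle I anticipate is step (i): verifying that $\mathcal{U}=\mathcal{V}+P$ is genuinely unitary on all of $X$. This hinges on two ingredients that must be in place simultaneously, namely the orthogonality $Ker(|T|)\perp\overline{Ran(|T|)}$ guaranteed by the polar decomposition hypothesis and the equality $\overline{Ran(T)}=\overline{Ran(|T|)}$ that rests on normality; without either, $\mathcal{V}+P$ would fail to be an isometric bijection of $X$. Once these identifications are secured, the remaining verifications are routine algebraic manipulations.
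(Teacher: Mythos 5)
Your proposal is correct and takes essentially the same approach as the paper: your operator $\mathcal{U}=\mathcal{V}+P$ is exactly the paper's unitary, which is defined piecewise as the identity on $Ker(T)$ and as $\mathcal{V}$ on $\overline{Ran(T^*)}$, and your converse computation ($T^*T=|T|^2=TT^*$ via the commutation of $\mathcal{U}$ with $|T|$) coincides with the paper's. The only cosmetic differences are that writing $\mathcal{U}$ as $\mathcal{V}+P$ makes adjointability and the formula for $\mathcal{U}^*$ automatic (the paper verifies the adjoint by hand), and that your algebraic derivation of $\mathcal{U}T=T\mathcal{U}$ and $\mathcal{U}T^*=T^*\mathcal{U}$ works for any unitary commuting with $|T|$, whereas the paper deduces it from the commutativity properties of $\mathcal{V}$.
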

\begin{proof} Suppose $T$ is a normal operator then
$Ker(T)=Ker(T^*)$ and $\overline{Ran(T)} = \overline{Ran(T^*)}$.
For every $x \in X=Ker(T) \oplus \overline{Ran(T^*)}$ we define

\[ \qquad \mathcal{U}x \, =~
\begin{cases}
~x ~ & \text{ if   $x \in Ker(T)$}\\
\mathcal{V}x ~ & \text{ if $x \in \overline{Ran(T^*)}, $}\
\end{cases}
\] \\
\[ \qquad \mathcal{W}x =
\begin{cases}
~x ~ & \text{ if   $x \in Ker(T^*)$}\\
\mathcal{V}^{*}x ~ & \text{ if $x \in \overline{Ran(T)} $}.\
\end{cases}
\]
Then $ \langle \mathcal{U}x,y \rangle = \langle x, \mathcal{W}y
\rangle $ for all $x,y \in X$, that is,  $ \mathcal{W}=
\mathcal{U}^{*}$. For each $x=x_1 + x_2 \in X$ with $x_1 \in
Ker(T)$ and $x_2 \in \overline{Ran(T^*)}$ we have $$ \mathcal{U}
\mathcal{U^*}x= \mathcal{U}(x_1+ \mathcal{V^*}x_2) = x_1 +
\mathcal{V}\mathcal{V^*}x_2=x.$$ Hence, $ \mathcal{U} \,
\mathcal{U}^{*}=1$ on $X$. We also have $ \mathcal{U}^{*} \,
\mathcal{U}= 1$ and $ T= \mathcal{U} |T|$ on $X$. Commutativity
of $\mathcal{U}$ with $T$, $T^*$ and $|T|$ follows from the
commutativity of $\mathcal{V}$ with $T$, $T^*$ and $|T|$.

Conversely, suppose $ T= \mathcal{U} |T|$ for a unitary operator $
\mathcal{U} \in \mathcal{L}(X)$ which commutes with $|T|$. Then $
T^*= |T| \, \mathcal{U}^* $ and so  $T \, T^*= \mathcal{U} \, |T|
\, |T| \, \mathcal{U}^* =|T| \,  \mathcal{U} \, |T| \,
\mathcal{U}^* =T^* T$.
\end{proof}

\begin{corollary} \label{normal3} Suppose $ T \in \mathcal{L}(X)$
admits the polar decomposition $T= \mathcal{V}|T|$. Then $T$ is a
normal operator if and only if there exists a unitary operator $
\mathcal{U} \in \mathcal{L}(X)$ such that $ T= \mathcal{U} T^*$.
In this situation, $\mathcal{U}$ commutes with $T$ and $T^*$.
\end{corollary}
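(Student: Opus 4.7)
The plan is to deduce the forward direction from Corollary \ref{normal/2}, which already provides a unitary $\mathcal{U}_0$ commuting with $|T|$ (and with $T$, $T^*$) such that $T = \mathcal{U}_0 |T|$, and to handle the converse by a direct adjoint manipulation of the equation $T = \mathcal{U} T^*$.

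For the forward direction, starting from $T = \mathcal{U}_0 |T|$ and taking adjoints while using that $\mathcal{U}_0$ commutes with $|T|$, I obtain $T^* = |T| \mathcal{U}_0^* = \mathcal{U}_0^* |T|$, and hence $\mathcal{U}_0 T^* = \mathcal{U}_0 \mathcal{U}_0^* |T| = |T|$. Applying $\mathcal{U}_0$ once more then gives $\mathcal{U}_0^2 T^* = \mathcal{U}_0 |T| = T$. Setting $\mathcal{U} := \mathcal{U}_0^2$ produces a unitary that inherits commutativity with $T$ and $T^*$ from $\mathcal{U}_0$ and realizes the desired factorization $T = \mathcal{U} T^*$.

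For the converse, I will assume $T = \mathcal{U} T^*$ for some unitary $\mathcal{U}$ and read off normality from adjoints. Taking the adjoint of the defining equation gives $T^* = T \mathcal{U}^*$; substituting this and the original equation then yields $T^* T = (T \mathcal{U}^*)(\mathcal{U} T^*) = T T^*$, establishing normality. The commutativity claim follows from the short calculation $T^* \mathcal{U} = T \mathcal{U}^* \mathcal{U} = T = \mathcal{U} T^*$, and passing to adjoints converts this into $\mathcal{U} T = T \mathcal{U}$. The only subtlety is cosmetic: in the forward direction the naive guess $\mathcal{U} = \mathcal{U}_0$ only gives $\mathcal{U}_0 T^* = |T|$, so one genuinely needs the square of the unitary supplied by Corollary \ref{normal/2}; beyond that, both directions reduce to very short manipulations with adjoints and commutation, so no serious obstacle is expected.
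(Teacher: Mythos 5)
Your proof is correct, but it reaches the forward direction by a different route than the paper. The paper does not invoke Corollary \ref{normal/2} at all: it works directly with the partial isometry $\mathcal{V}$, first computing $|T|=|T^*|=\mathcal{V}T^*$, hence $T=\mathcal{V}^{2}T^*$, and then repeats the case-by-case construction from the proof of Corollary \ref{normal/2} to build a unitary explicitly, setting $\mathcal{U}x=x$ on $Ker(T)$ and $\mathcal{U}x=\mathcal{V}^{2}x$ on $\overline{Ran(T^*)}$, verifying its adjoint by the analogous formula with $\mathcal{V}^{*\,2}$. You instead treat Corollary \ref{normal/2} as a black box: it hands you a genuine unitary $\mathcal{U}_0$ with $T=\mathcal{U}_0|T|$ commuting with $|T|$, $T$, $T^*$, and squaring it gives $T=\mathcal{U}_0^{2}T^*$ at once. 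This buys brevity and avoids re-verifying that a case-defined map is adjointable and unitary; what the paper's construction buys is an explicit description of the unitary as an extension of $\mathcal{V}^{2}$ (and in fact the two answers coincide: since $T$ is normal, $\mathcal{V}$ maps $\overline{Ran(T^*)}$ into itself, so $\mathcal{U}_0^{2}$ is exactly the paper's operator). One further point in your favor: your converse argument, $T^{*}\mathcal{U}=T\mathcal{U}^{*}\mathcal{U}=T=\mathcal{U}T^{*}$ followed by taking adjoints, shows that \emph{every} unitary satisfying $T=\mathcal{U}T^{*}$ commutes with $T$ and $T^{*}$, whereas the paper only records commutativity for the particular unitary it constructs; your reading of the final claim is therefore slightly stronger. (Minor gloss: passing from $\mathcal{U}^{*}T=T\,\mathcal{U}^{*}$ to $\mathcal{U}T=T\,\mathcal{U}$ requires multiplying by $\mathcal{U}$ on both sides, but that is routine.)
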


\begin{proof} Suppose $T$ is a normal operator then $|T|=|T^*|=\mathcal{V} T^*$
and so $T= \mathcal{V}|T|= \mathcal{V}|T^*|=\mathcal{V}^{2} T^*$.
For $x \in X$ we define

\[ \qquad \mathcal{U}x \, =~
\begin{cases}
~x ~ & \text{ if   $x \in Ker(T)$}\\
\mathcal{V}^{2}x ~ & \text{ if $x \in \overline{Ran(T^*)}. $}\
\end{cases}
\]
Then, as in the proof of Corollary  \ref{normal/2},
\[ \qquad \mathcal{U^*}x =
\begin{cases}
~x ~ & \text{ if   $x \in Ker(T^*)$}\\
\mathcal{V}^{* \, 2}x ~ & \text{ if $x \in \overline{Ran(T)} $},\
\end{cases}
\]
which implies $ \mathcal{U}$ is unitary and $ T= \mathcal{U} T^*$.
Commutativity of $\mathcal{U}$ with $T$ and $T^*$ follows from
the commutativity of $\mathcal{V}$ with $T$ and $T^*$.

Conversely, suppose $ T= \mathcal{U} T^*$ for a unitary operator
$ \mathcal{U} \in \mathcal{L}(X)$. Then $ T^*= ( \mathcal{U} T^*
)^* = T \, \mathcal{U}^* $ and so  $T^* T=T \, \mathcal{U}^* \,
\mathcal{U} \, T^*=T \, T^*$.
\end{proof}

If the normal operator $ T \in \mathcal{L}(X)$ has closed range,
one can find a shorter proof for the above result.

\begin{theorem} \label{normal4} Suppose $ t \in \mathcal{R}(X)$
admits the polar decomposition $t= \mathcal{V}|t|$. Then $t$ is a
normal operator if and only if there exists a unitary operator $
\mathcal{U} \in \mathcal{L}(X)$ such that $ t= \mathcal{U} t^*$.
In this situation, $t \, \mathcal{U}=\mathcal{U} \, t $ and $t^*
\, \mathcal{U}=\mathcal{U} \, t^* $ on $Dom(t)=Dom(t^*)$.
\end{theorem}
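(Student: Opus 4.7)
My plan is to reduce this unbounded statement to the bounded Corollary \ref{normal3} by passing to the bounded transform $F_t = t Q_t$, where $Q_t = (1+t^*t)^{-1/2}$. Two preliminary facts recorded in the introduction make this reduction clean: $t$ is normal if and only if $F_t$ is, and the polar decomposition $t = \mathcal{V}|t|$ corresponds to $F_t = \mathcal{V}|F_t|$ with the \emph{same} partial isometry $\mathcal{V}$. Consequently, Corollary \ref{normal3} applied to $F_t$ is available as soon as the normality hypothesis on $t$ is in force.

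For the forward implication, I would apply Corollary \ref{normal3} to $F_t$ to obtain a unitary $\mathcal{U} \in \mathcal{L}(X)$ with $F_t = \mathcal{U} F_t^*$ that commutes with both $F_t$ and $F_t^*$. To transfer this identity back to $t$, I would write an arbitrary $x \in Dom(t) = Ran(Q_t)$ as $x = Q_t y$ and rewrite $F_t^* = F_{t^*} = t^* Q_{t^*}$. Since normality of $t$ yields $t^*t = tt^*$ and hence $Q_t = Q_{t^*}$, evaluating $F_t y = \mathcal{U} F_t^* y$ collapses to $tx = \mathcal{U} t^* x$, which is what is needed.

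The commutation assertions $t\mathcal{U} = \mathcal{U} t$ and $t^* \mathcal{U} = \mathcal{U} t^*$ on $Dom(t) = Dom(t^*)$ are the step I expect to require the most care. I would obtain them by first noting that $\mathcal{U}$ commutes with $F_t^* F_t$, hence with $Q_t^2 = 1 - F_t^* F_t$, and therefore with $Q_t$ itself via continuous functional calculus. This intertwining shows that $\mathcal{U}$ preserves $Ran(Q_t) = Dom(t)$, and the required commutations on $Dom(t)$ then follow by writing a generic element as $Q_t y$ and using $\mathcal{U} F_t = F_t \mathcal{U}$ (and the analogous identity for $F_t^*$). The only real obstacle here is purely domain-tracking: one has to interpret each identity of the form $t = F_t Q_t^{-1}$ on the correct dense subspace.

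The converse is essentially a one-line check, parallel to the bounded case: if $t = \mathcal{U} t^*$ for a unitary $\mathcal{U} \in \mathcal{L}(X)$, then equality of operators forces $Dom(t) = Dom(\mathcal{U} t^*) = Dom(t^*)$, and unitarity gives $\langle tx, tx \rangle = \langle \mathcal{U} t^* x, \mathcal{U} t^* x \rangle = \langle t^* x, t^* x \rangle$ for every $x \in Dom(t)$, which is exactly the definition of normality recalled in the introduction.
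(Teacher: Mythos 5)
Your proposal is correct and follows essentially the same route as the paper's own proof: reduce to the bounded transform, apply Corollary \ref{normal3} to $F_t$, recover $t = \mathcal{U}\, t^*$ from $F_t = \mathcal{U} F_t^*$ via the invertibility of $Q_t : X \to Ran(Q_t) = Dom(t)$ (using $Q_{t^*} = Q_t$), and get the commutation relations from $\mathcal{U} Q_t = Q_t\, \mathcal{U}$, which follows by functional calculus from $\mathcal{U}$ commuting with $F_t^* F_t$. The only cosmetic differences are in the converse, where you verify the definition of normality directly via unitarity of $\mathcal{U}$ while the paper computes $t^* t = t\, t^*$ using $t^* = t\, \mathcal{U}^*$, and in the second commutation identity, where you argue by symmetry with $F_t^*$ while the paper takes adjoints of $\mathcal{U}^* t = t^*$; both variants are valid.
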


\begin{proof} Recall that $t$ admits the polar decomposition $t= \mathcal{V}|t|$
if and only if its bounded transform $F_t$ admits the polar
decomposition $F_t= \mathcal{V}|F_t|$, furthermore,  $t$ is a
normal operator if and only if its bounded transform $F_t$ is a
normal operator.

Suppose $t$ is a normal operator then there exists a unitary
operator $ \mathcal{U} \in \mathcal{L}(X)$ such that $t Q_t= F_t=
\mathcal{U} F_{t}^{*}=\mathcal{U} F_{t^*}= \mathcal{U} \, t^*
Q_{t^*}= \mathcal{U} \, t^* Q_{t}$. Since   $Q_{t}:X \rightarrow
Ran(Q_t)=Dom(t)$ is invertible, we obtain $t= \mathcal{U} t^*$.

Conversely, suppose $ t= \mathcal{U} t^*$ for a unitary operator $
\mathcal{U} \in \mathcal{L}(X)$. Then, in view of Remark 2.1 of
\cite{FS2}, we have $ t^*=( \mathcal{U}t^*)^*= t^{**} \,
\mathcal{U}^* = t \, \mathcal{U}^*$ on $Dom(t^*)$ and so  $t^*
t=t \, \mathcal{U}^* \, \mathcal{U} \, t^*=t \, t^*$.

According to Corollary \ref{normal3} and the first paragraph of
the proof, the unitary operator $ \mathcal{U}$ commutes with
$F_t$ and $F_{t}^{*}$. Thus for every polynomial $p$ we have $
\mathcal{U} \, p(F_{t}^{*} F_{t})= p(F_{t}^{*} F_{t}) \,
\mathcal{U}$ and so for every continuous function $p \in {\bf
C}[0,1]$ we have $ \mathcal{U} \, p(F_{t}^{*} F_{t})= p(F_{t}^{*}
F_{t}) \, \mathcal{U}$. In particular, $ \mathcal{U} \, (1-
F_{t}^{*} F_t )^{1/2}= (1 - F_{t}^{*} F_{t})^{1/2} \,
\mathcal{U}$ which implies $ \mathcal{U} \, Q_t= Q_t \,
\mathcal{U}$. This fact together with the equality $F_t \,
\mathcal{U}= \mathcal{U} F_t$ imply that $ t \, \mathcal{U} \, Q_t
=  t Q_t \, \mathcal{U} = \mathcal{U} \, t Q_t $. Again by
invertibility of the map $Q_{t}:X \rightarrow Ran(Q_t)=Dom(t)$ we
obtain $t \, \mathcal{U}= \mathcal{U} \, t $ on $Dom(t)$. To
demonstrate the second equality we have $ \mathcal{U^*} \,
t=\mathcal{U^*} \, \mathcal{U} \, t^*=t^*$ which yields  $ t^* \,
\mathcal{U} = ( \mathcal{U^*} \, t)^* = t^{**}=t= \mathcal{U} \,
t^* $ on $Dom(t^*)$.
\end{proof}

\begin{corollary} \label{normal5} Suppose $ t \in \mathcal{R}(X)$
has closed range. Then $t$ is a normal operator if and only if
there exists a unitary operator $ \mathcal{U} \in \mathcal{L}(X)$
such that $ t= \mathcal{U} t^*$. In this situation, $t \,
\mathcal{U}=\mathcal{U} \, t $ and $t^* \,
\mathcal{U}=\mathcal{U} \, t^* $ on $Dom(t)=Dom(t^*)$.
\end{corollary}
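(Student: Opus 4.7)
The plan is to reduce Corollary \ref{normal5} to Theorem \ref{normal4}, since the only additional hypothesis in that theorem is the existence of a polar decomposition for $t$. Thus the entire task amounts to verifying that the closed-range assumption forces $t$ to admit a polar decomposition, after which all three conclusions—the equivalence and the two commutation identities—transfer verbatim from Theorem \ref{normal4}.

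First I would invoke the assertion in the introduction that every regular operator with closed range admits a polar decomposition; this follows by combining Proposition 1.2 of \cite{F-S} with the list of equivalent conditions for polar decomposition recalled just above that assertion. Concretely, closed range of $t$ yields the orthogonal decompositions $X = Ker(t^*) \oplus \overline{Ran(t)}$ and $X = Ker(|t|) \oplus \overline{Ran(|t|)}$ required by the second bullet of that list, and hence produces a unique partial isometry $\mathcal{V} \in \mathcal{L}(X)$ with $Ker(\mathcal{V}) = Ker(t)$ such that $t = \mathcal{V}|t|$. At this point the hypotheses of Theorem \ref{normal4} are met.

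With the polar decomposition in hand, Theorem \ref{normal4} supplies both the equivalence (the existence of a unitary $\mathcal{U} \in \mathcal{L}(X)$ satisfying $t = \mathcal{U}t^*$) and the commutation statements $t\mathcal{U} = \mathcal{U}t$ and $t^*\mathcal{U} = \mathcal{U}t^*$ on $Dom(t) = Dom(t^*)$. Note that the identity $Dom(t) = Dom(t^*)$ is part of the definition of normality for regular operators recalled in the introduction, so no extra work is required to justify the domain on which the commutation identities are asserted.

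There is no substantive obstacle here; the corollary is essentially a cosmetic reformulation of Theorem \ref{normal4} in which the concrete hypothesis of closed range replaces the slightly more abstract hypothesis of existence of a polar decomposition. The only step that truly requires attention is citing the correct combination of results in order to pass from closed range to polar decomposition, and this has already been flagged in the preliminary section of the paper.
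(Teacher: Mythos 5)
Your proof is correct and follows exactly the paper's route: the paper likewise derives the corollary immediately from Theorem \ref{normal4} together with Proposition 1.2 of \cite{F-S} and Theorem 3.1 of \cite{FS2}, using the fact that a regular operator with closed range admits a polar decomposition. Your additional detail spelling out which orthogonal-summand conditions are produced by the closed-range hypothesis is a faithful unpacking of what the paper leaves implicit.
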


The proof immediately follows from Theorem \ref{normal4},
Proposition 1.2 of \cite{F-S} and Theorem 3.1 of \cite{FS2}.

\begin{corollary} \label{normal5-2} Suppose $X$ is a Hilbert space
(or a Hilbert C*-module over an arbitrary C*-algebra of compact
operators) and $t: Dom(t)  \subseteq X \to X$ is a densely
defined closed operator. Then $t$ is a normal operator if and
only if there exists a unitary operator $ \mathcal{U} \in
\mathcal{L}(X)$ such that $ t= \mathcal{U} t^*$. In this
situation, $t \, \mathcal{U}=\mathcal{U} \, t $ on
$Dom(t)=Dom(t^*)$.
\end{corollary}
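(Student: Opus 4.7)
The plan is to reduce Corollary \ref{normal5-2} directly to Theorem \ref{normal4} by invoking the structural results recalled in the introduction. The crucial observation is that for a Hilbert space, or more generally a Hilbert C*-module $X$ over a C*-algebra of compact operators, the hypotheses of Theorem \ref{normal4} are automatically satisfied for any densely defined closed operator $t$. Specifically, the discussion following the bounded transform is summarized in the paragraph stating that every such $t$ is automatically regular and automatically admits a polar decomposition $t=\mathcal{V}|t|$ (with $\mathcal{V}\in\mathcal{L}(X)$ a partial isometry satisfying $\operatorname{Ker}(\mathcal{V})=\operatorname{Ker}(t)$), referring to \cite{F-S, FS2, GUL}.

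Granting this, I would first note that $t\in\mathcal{R}(X)$ and $t=\mathcal{V}|t|$ for some partial isometry $\mathcal{V}$. Thus Theorem \ref{normal4} applies verbatim: $t$ is normal if and only if there exists a unitary $\mathcal{U}\in\mathcal{L}(X)$ with $t=\mathcal{U}t^*$, and in that case $t\,\mathcal{U}=\mathcal{U}\,t$ and $t^*\,\mathcal{U}=\mathcal{U}\,t^*$ on $\operatorname{Dom}(t)=\operatorname{Dom}(t^*)$. The equality $\operatorname{Dom}(t)=\operatorname{Dom}(t^*)$ itself is part of the definition of normality for regular operators recalled in the introduction.

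Accordingly, the proof is essentially a one-line citation: invoke \cite{F-S, FS2, GUL} to move from ``densely defined closed'' to ``regular with polar decomposition,'' then apply Theorem \ref{normal4}. There is no real obstacle; the only subtlety worth flagging is that the conclusion of the corollary only records the commutation $t\,\mathcal{U}=\mathcal{U}\,t$, whereas Theorem \ref{normal4} additionally yields $t^*\,\mathcal{U}=\mathcal{U}\,t^*$, so both commutation relations come for free. Consequently the proof should simply state: ``The result follows from Theorem \ref{normal4} together with the fact, cf.\ \cite{F-S, FS2, GUL}, that over a C*-algebra of compact operators every densely defined closed operator on a Hilbert C*-module is regular and admits a polar decomposition.''
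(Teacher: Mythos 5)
Your proposal is correct and matches the paper's (implicit) argument exactly: the paper states this corollary without proof precisely because, as you observe, the fact recalled in the introduction (cf.\ \cite{F-S, FS2, GUL}) that every densely defined closed operator on such an $X$ is automatically regular and admits a polar decomposition reduces it immediately to Theorem \ref{normal4}. Nothing further is needed.
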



Consider two normal operators $T$ and $S$ on a Hilbert space it is
known that, in general, $TS$ is not normal. Historical notes and
several versions of the problem are investigated in
\cite{Gheondea/normal}. Kaplansky has shown that it may be
possible that $TS$ is normal while $ST$ is not. Indeed, he has
shown that if $T$ and $TS$ are normal, then $ST$ is normal if and
only if $S$ commutes with $|T|$, cf.
\cite{Kaplansky/normality1953}. We generalize his result for
bounded adjointable operator on Hilbert C*-modules. For this aim
we need the Fuglede-Putnam theorem for bounded adjointable
operators on Hilbert C*-modules. Using Theorem 4.1.4.1 of
\cite{Constantinescu} for the unital C*-algebra $ \mathcal{L}(X)$,
we obtain:

\begin{theorem}\label{Fuglede-Putnam}{\bf (Fuglede-Putnam)} Assume that $T,
S$ and $A$ are bounded adjointable operator in $ \mathcal{L}(X)$.
Suppose $T$ and $S$ are normal and $T A = A S$, then $T^* A= A
S^*$.
\end{theorem}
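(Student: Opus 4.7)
The plan is to reduce the statement to the Fuglede--Putnam theorem inside the single unital C*-algebra $\mathcal{L}(X)$ and then carry out the standard Rosenblum-style argument, since all three operators $T, S, A$ already lie in this algebra. The two ingredients I would import from classical theory are the continuous functional calculus on $\mathcal{L}(X)$, to conclude that skew-adjoint elements exponentiate to unitaries of norm one, and Liouville's theorem for bounded entire functions with values in the Banach space $\mathcal{L}(X)$. Neither ingredient requires any hypothesis on the coefficient C*-algebra $\mathcal{A}$.

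My first step would be the routine induction $T^{n} A = A S^{n}$ for every $n \geq 0$, from which summation of the exponential series gives
$$\exp(\lambda T) \, A = A \, \exp(\lambda S) \qquad (\lambda \in \mathbb{C}).$$
I would then introduce the entire $\mathcal{L}(X)$-valued function
$$g(\lambda) = \exp(\lambda T^{*}) \, A \, \exp(-\lambda S^{*}).$$
Inserting the identity $\exp(-\bar\lambda T)\exp(\bar\lambda T) = 1$ between $\exp(\lambda T^{*})$ and $A$, pushing $\exp(\bar\lambda T)$ across $A$ by the previous display, and using the commutativity of $T^{*}$ with $T$ and of $S^{*}$ with $S$ that the normality hypothesis provides, produces the key rewriting
$$g(\lambda) = \exp(\lambda T^{*} - \bar\lambda T) \, A \, \exp(\bar\lambda S - \lambda S^{*}).$$

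The pay-off is that $\lambda T^{*} - \bar\lambda T$ and $\bar\lambda S - \lambda S^{*}$ are skew-adjoint elements of $\mathcal{L}(X)$, so their exponentials are unitary and therefore have norm one. Consequently $\|g(\lambda)\| \leq \|A\|$ for every $\lambda \in \mathbb{C}$, and Liouville's theorem forces $g$ to be constant and equal to $g(0) = A$. Differentiating at $\lambda = 0$ then yields $T^{*} A - A S^{*} = 0$, which is the claim.

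The only real obstacle I anticipate is keeping the bookkeeping in the rewriting of $g(\lambda)$ straight: each of the two normality hypotheses is needed precisely once, to commute the inserted factors, and the prior exponential intertwining is needed to move $A$ past them. Once the displayed identity for $g$ is in place, no module-theoretic subtlety remains, and the argument is essentially identical to the Hilbert-space version that underlies the C*-algebraic Fuglede--Putnam theorem being invoked.
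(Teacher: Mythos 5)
Your argument is correct as it stands: the induction giving $T^{n}A=AS^{n}$ and hence $\exp(\lambda T)A=A\exp(\lambda S)$; the rewriting of $g(\lambda)=\exp(\lambda T^{*})A\exp(-\lambda S^{*})$ as $\exp(\lambda T^{*}-\bar\lambda T)\,A\,\exp(\bar\lambda S-\lambda S^{*})$, which uses normality of $T$ and of $S$ exactly once each; the bound $\|g(\lambda)\|\leq\|A\|$ because skew-adjoint elements of a unital C*-algebra exponentiate to unitaries of norm one; and Liouville's theorem for Banach-space-valued entire functions, after which differentiation at $\lambda=0$ gives $T^{*}A=AS^{*}$. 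The comparison with the paper is lopsided, however: the paper offers no argument at all beyond the remark preceding the statement, namely that $T$, $S$, $A$ all lie in the unital C*-algebra $\mathcal{L}(X)$, so that Theorem 4.1.4.1 of \cite{Constantinescu} (the Fuglede--Putnam theorem for general unital C*-algebras) applies verbatim. You perform exactly the same reduction --- recognizing that once everything sits inside the single C*-algebra $\mathcal{L}(X)$, none of the Hilbert-module pathologies (non-complemented ranges, failure of polar decomposition) that occupy the rest of the paper can interfere, and no hypothesis on the coefficient algebra $\mathcal{A}$ is needed --- but then, instead of citing the C*-algebraic theorem, you prove it, by what is essentially Rosenblum's classical argument. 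So your route is not conceptually different; it is the paper's route with the black box opened. What your version buys is self-containedness, resting only on the power-series exponential, the C*-identity (to get norm one for unitaries), and Hahn--Banach together with scalar Liouville; what the paper's citation buys is a one-line proof.
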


\begin{corollary}Let $T, S \in
\mathcal{L}(X)$ be such that $T$ and $TS$ are normal and $T$ has
polar decomposition. $ST$ is normal if and only if $S$ commutes
with $|T|$.
\end{corollary}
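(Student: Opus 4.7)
The plan is to split the biconditional. Both directions exploit the polar decomposition $T = \mathcal{V}|T|$ together with the structural information for a normal $T$ furnished by Corollary \ref{normal2}: the partial isometry $\mathcal{V}$ commutes with $|T|$, and $P := \mathcal{V}^*\mathcal{V} = \mathcal{V}\mathcal{V}^*$ is the orthogonal projection onto $\overline{Ran(T)} = \overline{Ran(|T|)}$, so that $|T|^2 P = |T|^2$.

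For the forward direction, assume $ST$ is normal. The crucial observation is the trivial associativity identity $(TS)\,T = T\,(ST)$. Since $TS$ and $ST$ are both normal, this is an intertwining of two normal operators by $T$, so the Fuglede--Putnam theorem (Theorem \ref{Fuglede-Putnam}) applies and gives $(TS)^*T = T(ST)^*$, that is, $S^*T^*T = TT^*S^*$. Normality of $T$ collapses both sides to $|T|^2$, yielding $S^*|T|^2 = |T|^2 S^*$; as in Proposition \ref{normal1}, continuous functional calculus on the positive element $|T|^2$ upgrades this to $S^*|T| = |T|S^*$, and taking adjoints produces $S|T| = |T|S$.

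For the reverse direction, assume $S|T| = |T|S$. Taking adjoints gives $S^*|T| = |T|S^*$, so $|T|$ commutes with $SS^*$ and $S^*S$; since any operator commuting with $|T|$ preserves both $Ker(|T|)$ and $\overline{Ran(|T|)}$, such operators also commute with $P$. A direct computation gives $(ST)(ST)^* = STT^*S^* = S|T|^2 S^* = |T|^2 SS^*$. To handle $(ST)^*(ST) = T^*S^*ST$, I substitute $T = \mathcal{V}|T|$ and $T^* = |T|\mathcal{V}^*$ and push the two $|T|$ factors outward using the commutations of $|T|$ with $S^*S$ and with $\mathcal{V}$, obtaining $(ST)^*(ST) = |T|^2\,\mathcal{V}^*S^*S\mathcal{V}$. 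The remaining task is to verify $|T|^2 \mathcal{V}^*S^*S\mathcal{V} = |T|^2 SS^*$. For this I invoke that $TS$ is normal: expanding $(TS)(TS)^* = (TS)^*(TS)$ in the same way delivers $|T|^2 \mathcal{V}SS^*\mathcal{V}^* = |T|^2 S^*S$, and conjugating this identity by $\mathcal{V}^*$ on the left and $\mathcal{V}$ on the right (using $\mathcal{V}^*|T|^2\mathcal{V} = |T|^2 P = |T|^2$ and the commutation of $SS^*$ with $P$) produces exactly the required equality.

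The conceptual heart of the argument is the associativity identity $(TS)T = T(ST)$, which makes Fuglede--Putnam perform the forward direction in a single line. The main technical obstacle is the reverse direction: the partial isometry $\mathcal{V}$ does \emph{not} commute with $S$ or $S^*$, so one cannot simply push $\mathcal{V}$ factors to the outside and cancel. The manoeuvre is to leave the conjugations $\mathcal{V}(\cdot)\mathcal{V}^*$ and $\mathcal{V}^*(\cdot)\mathcal{V}$ intact, and to obtain the second from the first by conjugating the normality-of-$TS$ identity, relying throughout only on the fact that $|T|$ itself (and hence the projection $P$ arising from the polar decomposition) commutes with $S$, $S^*$, $SS^*$, and $S^*S$.
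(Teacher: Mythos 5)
Your forward direction coincides with the paper's own: both exploit the associativity identity $(TS)\,T = T\,(ST)$, apply Theorem \ref{Fuglede-Putnam} to the normal pair $TS$, $ST$ with intertwiner $T$, use normality of $T$ to conclude that $S^*$ commutes with $T^*T=|T|^2$, and then pass from $|T|^2$ to $|T|$ and take adjoints. The reverse direction is where you genuinely diverge. The paper invokes Corollary \ref{normal/2}: since $T$ is normal and has polar decomposition, $T=\mathcal{U}\,|T|$ for a \emph{unitary} $\mathcal{U}$ commuting with $|T|$, whence $\mathcal{U}^*\,TS\,\mathcal{U} = |T|\,S\,\mathcal{U} = S\,|T|\,\mathcal{U} = S\,\mathcal{U}\,|T| = ST$; thus $ST$ is unitarily equivalent to the normal operator $TS$ and is therefore normal --- a one-line argument that even yields the stronger conclusion that $ST$ and $TS$ are unitarily equivalent. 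You instead stay with the partial isometry $\mathcal{V}$ and verify the defining identity $(ST)^*(ST)=(ST)(ST)^*$ directly. Your computation is correct: $\mathcal{V}$ commutes with $|T|$ by Corollary \ref{normal2}; the operators $S$, $S^*$, $SS^*$, $S^*S$ commute with $|T|$ and hence with $P=\mathcal{V}^*\mathcal{V}$, because the polar decomposition hypothesis supplies exactly the decomposition $X=Ker(|T|)\oplus\overline{Ran(|T|)}$ needed for that inference; and conjugating the normality identity $|T|^2\,\mathcal{V}SS^*\mathcal{V}^* = |T|^2\,S^*S$ of $TS$ by $\mathcal{V}^*(\cdot)\mathcal{V}$, using $\mathcal{V}^*|T|^2\mathcal{V}=|T|^2$ and $PSS^*=SS^*P$, does produce $|T|^2\,SS^* = |T|^2\,\mathcal{V}^*S^*S\mathcal{V}$, which closes the argument. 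What you lose relative to the paper is brevity and the conceptual punchline (normality is trivially preserved under unitary equivalence); what you gain is that you never need the unitary extension $\mathcal{U}$ of $\mathcal{V}$ constructed in Corollary \ref{normal/2} --- your argument runs entirely on the commutation relations among $\mathcal{V}$, $|T|$, $S$ and the projection $P$.
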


\begin{proof} Suppose $ST$ and $T$ are normal operators and $A=TS$ and $B=ST$,
then $AT=TB$. In view of the Theorem \ref{Fuglede-Putnam},
$A^*T=TB^*$, that is, $S^*T^*T=T \, T^* S^*$, and taking into
account the normality of $T$, we find $S^*$ commutes with $T^*T$.
Therefore, $S^* |T|=|T| S^*$ and so $S$ commutes with $|T|$ by
the Fuglede-Putnam theorem.

Conversely, suppose $S$ commutes with $|T|$. Then the normal
operator $T$ has a representation $T= \mathcal{U}\, |T|$ in which
$ \mathcal{U} \in \mathcal{L}(X)$ is unitary and commutes with
$|T|$. Therefore,
$$ \mathcal{U}^* \, TS \,  \mathcal{U}=   \mathcal{U}^* \,
\mathcal{U}\,  |T| \, S  \, \mathcal{U} = S |T| \, \mathcal{U} = S
\, \mathcal{U}  \, |T|=ST .$$ The operator $ST$ is normal as an
operator which is unitary equivalent with the normal operator
$TS$.
\end{proof}



\begin{thebibliography}{99}
\bibitem {ARA} Lj. Aramba\v{s}i\'c, Another characterization of Hilbert C*-modules
over compact operators, {\it J. Math. Anal. Appl.}  {\bf 344}
(2008), no. 2, 735-740.
\bibitem {ARV} W. Arveson, {\it An Invitation to $C^*$-algebras},
  Springer, New York, 1976.
\bibitem {B-G} D. Baki\'c and B. Gulja\v{s}, Hilbert C*-modules
over C*-algebras of compact operators, {\it Acta Sci. Math.}
(Szeged) {\bf 68} (2002), no. 1-2, 249-269.

\bibitem {B-G/class} D. Baki\'c and B. Gulja\v{s}, On a class of module maps of
Hilbert C*-modules, {\it Math. Commun.}  {\bf7} (2002), no. 2,
177-192.
\bibitem {Constantinescu} C. Constantinescu, {\it C*-algebras. Vol. 3: General theory
of C*-algebras}, North-Holland, Amsterdam, 2001.
\bibitem {FR2} M. Frank, Geometrical aspects of Hilbert C*-modules,
{\it Positivity} {\bf 3} (1999), 215-243.
\bibitem {F-S} M. Frank and K. Sharifi, Adjointability of densely
  defined closed operators and the Magajna-Schweizer theorem,
  {\it J. Operator Theory}  {\bf 63} (2010),  271-282.
\bibitem {FS2} M. Frank and K. Sharifi, Generalized inverses and polar
decomposition of unbounded regular operators on Hilbert
C*-modules, {\it J. Operator Theory} {\bf 64} (2010), 377-386.

\bibitem {Gheondea/normal} A. Gheondea, When are the products of
normal operators normal?, {\it  Bull. Math. Soc. Roum. Math.,}
{\bf 52} (2009), 129-150.

\bibitem {GUL} B. Gulja\v{s}, Unbounded operators on Hilbert
C*-modules over C*-algebras of compact operators, {\it J. Operato
Theory}   {\bf 59}  (2008),  no. 1, 179-192.
\bibitem {Horn/Matrix} R. A. Horn and C. Johnson, {\it Matrix Analysis},
Cambridge Univ. Press, 1985.
\bibitem {Kaplansky/normality1953} I. Kaplansky, Products of normal operators,
{\it Duke Math. J.} {\bf20} (1953), 257-260.
\bibitem {KUS} J. Kustermans, The functional calculus of regular
  operators on Hilbert C*-modules revisited, available at
  arXiv:funct-an/9706007 v1 20 Jun 1997.
\bibitem {LAN} E. C. Lance, {\it Hilbert $C^*$-Modules}, LMS
  Lecture Note Series 210, Cambridge Univ. Press, 1995.

\bibitem {SHA/GAP} K. Sharifi, The gap between unbounded Regular
operators, to appear in {\it J. Operator Theory}, available on
arXiv:math.OA/0901.1891 v1 13 Jan 2009.
\bibitem {SHA/PARTIAL} K. Sharifi, Descriptions of partial isometries on Hilbert
C*-modules, {\it Linear Algebra Appl.} {\bf  431} (2009), 883-887.
\bibitem {SHA/APROACH} K. Sharifi, Topological approach to unbounded operators on Hilbert
C*-modules, to appear in {\it Rocky Mountain J. Math.}
\bibitem {SHA/Continuity} K. Sharifi, Continuity of the polar decomposition for unbounded operators
on Hilbert C*-modules, { \it Glas. Mat. Ser. III.} {\bf 45}
(2010), 505-512

\bibitem {WEG} N. E. Wegge-Olsen, {\it K-theory and $C^*$-algebras:
 a Friendly Approach}, Oxford University Press, Oxford, England, 1993.
\end{thebibliography}
\end{document}